\numberwithin{equation}{section}
\newtheorem{theorem}{Theorem}[section]
\newtheorem{definition}{Definition}[section]
\newtheorem{lemma}[theorem]{Lemma}
\begin{document}
\begin{center}
{\Large{\textbf{Contemplating on Brush Numbers of Mycielski Jaco Graphs, $\mu(J_n(1)), n \in \Bbb N$}}} 
\end{center}
\vspace{0.5cm}
\large{\centerline{(Johan Kok, Susanth C, Sunny Joseph Kalayathankal)\footnote {\textbf {Affiliation of author:}\\
\noindent Johan Kok (Tshwane Metropolitan Police Department), City of Tshwane, Republic of South Africa\\
e-mail: kokkiek2@tshwane.gov.za\\ \\
\noindent Susanth C (Department of Mathematics, Vidya Academy of Science and Technology), Thalakkottukara, Thrissur-680501, Republic of India\\
e-mail: susanth\_c@yahoo.com\\ \\
\noindent Sunny Joseph Kalayathankal (Department of Mathematics, Kuriakose Elias College), Mannanam, Kottayam- 686561, Kerala, Republic of India\\
e-mail: sunnyjoseph2014@yahoo.com}}
\vspace{0.5cm}
\begin{abstract}
\noindent The concept of the brush number $b_r(G)$ was introduced for a simple connected undirected graph $G$. The concept will be applied to the Mycielski Jaco graph $\mu(J_n(1)), n \in \Bbb N,$  in respect of an \emph{optimal orientation} of $J_n(1)$ associated with $b_r(J_n(1))$.\\ \\
\noindent Further to the above the concept of a \emph{brush centre} of a simple connected graph will be introduced. Because brushes themselves may be technology of kind, the technology in real worl application will normally be the subject of maitenance or calibration or virus vetting or alike. Finding a \emph{brush centre} of a graph will allow for well located maintenance centres of the brushes prior to a next cycle of cleaning.
\end{abstract}
\noindent {\footnotesize \textbf{Keywords:} Brush number, Jaco graph, Mycielski Jaco graph, Brush centre}\\ \\
\noindent {\footnotesize \textbf{AMS Classification Numbers:} 05C07, 05C12, 05C20, 05C38, 05C70} 
\section{Introduction}
\noindent  For a general reference to notation and concepts of graph theory see [1]. For ease of self-containess we shall briefly introduce the concepts of \emph{brush numbers} and \emph{Mycielskian graphs}.
\subsection{The brush number of a simple connected graph $G$}
\noindent The concept of the brush number $b_r(G)$ of a simple connected graph $G$ was introduced by McKeil [5] and Messinger et. al. [7]. The problem is initially set that all edges of a simple connected undirected graph $G$ is \emph{dirty}. A finite number of brushes, $\beta_G(v) \geq 0$ is allocated to each vertex $v \in V(G).$ Sequentially any vertex which has $\beta_G(v) \geq d(v)$ brushes allocated may clean the vertex $v$ and send exactly one brush along a dirty edge and in doing so allocate an additional brush to the corresponding adjavent vertex (neighbour). The reduced graph $G' = G - vu_{\forall vu \in E(G), \beta_G(v) \geq d(v)}$ is considered for the next iterative cleaning step. Note that a neighbour of vertex $v$ in $G$ say vertex $u$, now have $\beta_{G'}(u) =\beta_G(u) + 1.$\\ \\
\noindent Clearly for any simple connected undirected graph $G$ the first step of cleaning can begin if and only if at least one vertex $v$ is allocated, $\beta_G(v)= d(v)$ brushes. The minimum number of brushes that is required to allow the first step of cleaning to begin is, $\beta_G(u) = d(u) = \delta(G).$ Note that these conditions do not guarantee that the graph will be cleaned. The conditions merely assure at least the first step of cleaning.\\ \\
\noindent If a simple connected graph $G$ is orientated to become a directed graph, brushes may only clean along an out-arc from a vertex. Cleaning may initiate from a vertex $v$\ if and only if $\beta_G(v) \geq d^+(v)$ and $d^-(v) =0.$ The order in which vertices sequentially initiate cleaning is called the \emph{cleaning sequence} in respect of the orientation $\alpha_i$. The minimum number of brushes to be allocated to clean a graph for a given orientation $\alpha_i(G)$ is denoted $b_r^{\alpha_i}$. If an orientation $\alpha_i$ renders cleaning of the graph undoable we define $b_r^{\alpha_i} = \infty.$ An orientation $\alpha_i$ for which $b_r^{\alpha_i}$ is a minimum over all possible orientations is called \emph{optimal}.\\ \\
Now, since the graph $G$ having $\epsilon(G)$ edges can have $2^{\epsilon(G)}$ orientations, the \emph{optimal orientation} is not necessary unique. Let the set $\Bbb A =\{\alpha_i|\emph{ $\alpha_i$ an orientation of G}\}.$
\begin{lemma}
For a simple connected directed graph $G$, we have that:\\ \\ $b_r(G) = min_{\emph{over all $\alpha_i \in \Bbb A$}}(\sum_{v \in V(G)}max\{0, d^+(v) - d^-(v)\}) = min_{\forall \alpha_i}b_r^{\alpha_i}.$
\end{lemma}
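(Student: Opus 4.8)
The plan is to prove both equalities by first fixing a single orientation and counting exactly, then taking the minimum, and finally reconciling the directed count with the undirected brush number $b_r(G)$.

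First I would fix an orientation $\alpha_i$ for which cleaning is \emph{doable} and examine an arbitrary valid cleaning sequence for it. The key local observation is that each vertex fires exactly once, and it fires only after all of its in-arcs have been used; since each in-arc delivers exactly one brush, at the moment $v$ becomes ready to fire we have $\beta_{G'}(v) = \beta_G(v) + d^-(v)$. Firing sends one brush along each of the $d^+(v)$ out-arcs, so the firing condition $\beta_{G'}(v) \geq d^+(v)$ becomes $\beta_G(v) \geq d^+(v) - d^-(v)$. Because $\beta_G(v) \geq 0$, the least admissible allocation at $v$ is $\max\{0, d^+(v) - d^-(v)\}$. Summing over $V(G)$ gives $b_r^{\alpha_i} = \sum_{v \in V(G)} \max\{0, d^+(v) - d^-(v)\}$, which is the identity appearing under the middle minimum.

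Second I would show that only acyclic orientations are doable, so that the two minima genuinely coincide. If $\alpha_i$ contains a directed cycle, every vertex on that cycle carries an in-arc coming from another cycle vertex, so no such vertex can ever reach $d^-(\cdot) = 0$ in the reduced graph; this deadlock forces $b_r^{\alpha_i} = \infty$. Hence the minimum of $b_r^{\alpha_i}$ over all of $\Bbb A$ is attained on an acyclic orientation, where the finite count of the first step applies. This is precisely why the middle expression must be read over doable orientations: on a directed cycle one has $d^+(v) = d^-(v)$ at each vertex, so the bare sum $\sum_v \max\{0, d^+(v) - d^-(v)\}$ would vanish even though nothing is cleaned.

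Finally I would establish the correspondence between cleaning processes of the \emph{undirected} graph $G$ and acyclic orientations. Given an optimal cleaning of $G$, orienting each edge in the direction along which it is actually cleaned yields an acyclic orientation (the firing order is a topological order, and no edge is cleaned backwards), using exactly $\sum_v \max\{0, d^+(v) - d^-(v)\}$ brushes; this gives $b_r(G) \geq \min_{\alpha_i} b_r^{\alpha_i}$. Conversely, any acyclic orientation supplies a topological order serving as a valid cleaning sequence, whence the reverse inequality. Combining the three steps gives $b_r(G) = \min_{\alpha_i}(\sum_{v \in V(G)} \max\{0, d^+(v) - d^-(v)\}) = \min_{\alpha_i} b_r^{\alpha_i}$. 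I expect the main obstacle to be this last step: making the correspondence fully rigorous, in particular verifying that the greedy allocation $\max\{0, d^+(v) - d^-(v)\}$ never stalls along a topological order, since when $v$'s turn arrives it holds exactly $\max\{0, d^+(v) - d^-(v)\} + d^-(v) = \max\{d^-(v), d^+(v)\} \geq d^+(v)$ brushes, and that optimality transfers across the correspondence. The exclusion of cyclic orientations flagged in the second step is the other point requiring care.
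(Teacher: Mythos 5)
Your proposal is correct, but note that the paper itself offers no argument at all: its entire ``proof'' of this lemma is the citation ``See [9]'' (Tan), where the result appears as a known lemma going back to Messinger, Nowakowski and Pra\l at [7]. Your three-step argument --- the exact count $b_r^{\alpha_i} = \sum_{v}\max\{0, d^+(v)-d^-(v)\}$ for a fixed doable orientation, the deadlock argument showing directed cycles make cleaning undoable, and the correspondence between undirected cleaning sequences and acyclic orientations via the firing order --- is essentially the standard proof from that literature, so in substance you have reconstructed what the citation hides. Your write-up also buys something the paper's statement lacks: you make explicit that the middle minimum must be taken over \emph{doable} (equivalently acyclic) orientations, and this is not a pedantic point. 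As literally stated, with $\Bbb A$ the set of all $2^{\epsilon(G)}$ orientations, the lemma is false: for the cyclic orientation of $C_n$ every vertex has $d^+(v)=d^-(v)=1$, so the middle expression would give $0$, while $b_r(C_n)=2$. The right-hand minimum survives this because undoable orientations are assigned $b_r^{\alpha_i}=\infty$, but the middle expression has no such safeguard. So your proof is both a valid self-contained replacement for the paper's citation and a repair of an imprecision in the very statement being proved; the only cosmetic improvements I would suggest are to note explicitly that vertices of out-degree zero need never fire (their contribution to the sum is $0$ in any case), and to observe that the statement's phrase ``directed graph'' should read ``undirected graph,'' since $b_r(G)$ is minimized over all orientations of $G$.
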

\begin{proof}
See [9].
\end{proof}
\noindent Although we mainly deal with simple connected graphs it is easy to see that for set of simple connected graphs $\{G_1, G_2, G_3, ..., G_n\}$ we have that, $b_r(\cup_{\forall i}G_i) = \sum\limits_{i=1}^{n}b_r(G_i).$
\subsection{Mycielskian graph $\mu(G)$ of a graph, $G$}
\noindent Mycielski [8]  introduced an interesting graph transformation in 1955. The transformation can be described as follows:\\ \\
(1) Consider any simple connected graph $G$ on $n \geq 2$ vertices labelled $v_1, v_2, v_3, ..., v_n$.\\
(2) Consider the extended vertex set $V(G) \cup \{x_1, x_2, x_3, ..., x_n\}$ and add the edges $\{v_ix_j, v_jx_i|$ iff $v_iv_j \in E(G)\}.$\\
(3) Add one more vertex $w$ together with the edges $\{wx_i| \forall i\}.$\\ \\
The transformed graph \emph{(Mycielskian graph of G or Mycielski G)} denoted $\mu(G),$ is the simple connected graph with $V(\mu(G)) = V(G) \cup \{x_1, x_2, x_3, ..., x_n\} \cup \{w\}$ and $E(\mu(G)) = E(G) \cup \{v_ix_j, v_jx_i|$ iff $v_iv_j \in E(G)\} \cup \{wx_i| \forall i\}.$ 
\section{Brush Numbers of Mycielski Jaco Graphs, $\mu(J_n(1)), n \in \Bbb N$}
The infinite Jaco graph (\emph{order 1}) was introduced in $[2],$ and defined by $V(J_\infty(1)) = \{v_i| i \in \Bbb N\}$, $E(J_\infty(1)) \subseteq \{(v_i, v_j)| i, j \in \Bbb N, i< j\}$ and $(v_i,v_ j) \in E(J_\infty(1))$ if and only if $2i - d^-(v_i) \geq j.$\\ \\ The graph has four fundamental properties which are; $V(J_\infty(1)) = \{v_i|i \in \Bbb N\}$ and, if $v_j$ is the head of an edge (arc) then the tail is always a vertex $v_i, i<j$ and, if $v_k,$ for smallest $k \in \Bbb N$ is a tail vertex then all vertices $v_ \ell, k< \ell<j$ are tails of arcs to $v_j$ and finally, the degree of vertex $k$ is $d(v_k) = k.$ The family of finite directed graphs are those limited to $n \in \Bbb N$ vertices by lobbing off all vertices (and edges arcing to vertices) $v_t, t > n.$ Hence, trivially we have $d(v_i) \leq i$ for $i \in \Bbb N.$\\ \\
For ease of reference we repeat a few definitions found in [2].
\begin{definition}
The infinite Jaco Graph $J_\infty(1)$ is defined by $V(J_\infty(1)) = \{v_i| i \in \Bbb N\}$, $E(J_\infty(1)) \subseteq \{(v_i, v_j)| i, j \in \Bbb N, i< j\}$ and $(v_i,v_ j) \in E(J_\infty(1))$ if and only if $2i - d^-(v_i) \geq j.$
\end{definition}
\begin{definition}
The family of finite Jaco Graphs are defined by $\{J_n(1) \subseteq J_\infty(1)|n\in \Bbb {N}\}.$ A member of the family is referred to as the Jaco Graph, $J_n(1).$
\end{definition}
\begin{definition}
The set of vertices attaining degree $\Delta (J_n(1))$ is called the Jaconian vertices of the Jaco Graph $J_n(1),$ and denoted, $\Bbb{J}(J_n(1))$ or, $\Bbb{J}_n(1)$ for brevity.
\end{definition}
\begin{definition}
The lowest numbered (indiced) Jaconian vertex is called the prime Jaconian vertex of a Jaco Graph.
\end{definition}
\begin{definition}
If $v_i$ is the prime Jaconian vertex of a Jaco Graph $J_n(1)$, the complete subgraph on vertices $v_{i+1}, v_{i+2}, \cdots,v_n$ is called the Hope subgraph of a Jaco Graph and denoted,  $\Bbb{H}(J_n(1))$ or, $\Bbb{H}_n(1)$ for brevity.
\end{definition}
\noindent  Note that the Fisher Algorithm determines $d^+(v_i)$ on the assumption that the Jaco Graph is always sufficiently large, so at least $J_n(1), n \geq i+ d^+(v_i).$ So technically the value $d^+(v_i)$ determined by the Fisher Algorithm is, $d^+(v_i) = d_{J_{\infty}}^+(v_i).$ For a smaller graph the degree of vertex $v_i$ is given by $d_{J_n(1)}(v_i) = d^-(v_i) + (n-i).$ In $[2]$ Bettina's theorem describes an arguably, closed formula to determine $d^+(v_i)$. Since $d^-(v_i) = n - d^+(v_i)$ it is then easy to determine $d_{J_n(1)}(v_i)$ in a smaller graph $J_n(1), n< i + d^+(v_i).$ It is important to note that Definition 2.2 read together with Definition 2.1, prescribes a well-defined orientation of the underlying Jaco graph. So we have one defined orientation of the $2^{\epsilon(J_n(1))}$ possible orientations. In [4] the following theorem is proven.
\begin{theorem}
For the finite Jaco Graph $J_n(1), n \in \Bbb N,$ with prime Jaconian vertex $v_i$ we have that:\\ \\
$b_r(J_n(1)) = \sum\limits_{j=1}^{i}(d^+(v_j) - d^-(v_j)) + \sum\limits_{j = i+1}^{n}max\{0, (n-j)- d^-(v_j)\}.$
\end{theorem}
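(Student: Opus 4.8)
The plan is to apply Lemma 1.1, which reduces the computation of $b_r(J_n(1))$ to minimising $\sum_{v}\max\{0,d^+(v)-d^-(v)\}$ over all orientations $\alpha_i\in\Bbb A$, where an orientation leaving a directed cycle is assigned the value $\infty$ and hence never attains the minimum; the effective minimisation therefore runs over the acyclic orientations of $J_n(1)$. Since Definition 2.1 together with Definition 2.2 already prescribes one such orientation $\alpha_J$ (every arc runs from a lower index to a higher index, so $\alpha_J$ is acyclic), the strategy is to carry out two tasks: first, evaluate $\sum_{v}\max\{0,d^+(v)-d^-(v)\}$ for $\alpha_J$ and show it equals the claimed right-hand side; second, prove that $\alpha_J$ is optimal, i.e.\ that no acyclic orientation produces a strictly smaller sum.

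For the first task I would split the vertex set at the prime Jaconian vertex $v_i$. The fundamental properties of $J_\infty(1)$ guarantee that each in-neighbourhood is a contiguous block of lower-indexed vertices, so $d^-(v_j)$ is unaffected by the truncation to $n$ vertices. The prime Jaconian vertex is exactly the transition index: for $j\le i$ the infinite out-neighbourhood $\{v_{j+1},\dots,v_{j+d^+(v_j)}\}$ still fits inside $J_n(1)$, whence $d^+_{J_n(1)}(v_j)=d^+(v_j)$ is untruncated; for $j>i$ the Hope subgraph $\Bbb H_n(1)$ on $v_{i+1},\dots,v_n$ is complete, so $v_j$ emits an arc to every one of $v_{j+1},\dots,v_n$ and $d^+_{J_n(1)}(v_j)=n-j$. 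Invoking the structural inequality $d^+(v_j)\ge d^-(v_j)$ valid for $j\le i$ (so that the maximum may be dropped on the first block) and substituting $d^+_{J_n(1)}(v_j)=n-j$ on the second block yields
\[
\sum_{v}\max\{0,d^+(v)-d^-(v)\}=\sum_{j=1}^{i}\bigl(d^+(v_j)-d^-(v_j)\bigr)+\sum_{j=i+1}^{n}\max\{0,(n-j)-d^-(v_j)\},
\]
which is precisely $b_r^{\alpha_J}$ and the right-hand side of the statement.

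The second task, optimality, is where I expect the real difficulty. I would first rewrite the objective in symmetric form: because $\sum_{v}(d^+(v)-d^-(v))=0$ for every orientation, one has $\sum_{v}\max\{0,d^+(v)-d^-(v)\}=\tfrac12\sum_{v}|d^+(v)-d^-(v)|$, so optimality amounts to showing that the natural order minimises the total imbalance among all linear orders of $V(J_n(1))$, each linear order inducing the acyclic orientation that points every edge forward. The main lever is an adjacent-transposition analysis: interchanging two consecutive vertices of an order reverses at most the single edge joining them and leaves every other arc fixed, so the objective moves in controlled increments of the two affected imbalances. I would combine this with two structural facts---(a) any acyclic orientation of the complete Hope subgraph is a transitive tournament, so its internal contribution $\tfrac12\sum|d^+-d^-|$ is the same for all orders and cannot be improved; and (b) the nested interval structure of the Jaco in-neighbourhoods, which forces the increasing order to be a greedy-optimal linear extension on the lower block $v_1,\dots,v_i$. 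The crux, and the step I expect to be the main obstacle, is controlling the coupling between the lower block and the complete Hope block: one must show that no interleaving of Hope vertices with lower-indexed tail vertices can trade away a unit of imbalance without creating at least as much elsewhere, which I would establish by an exchange argument showing the identity order is a global minimiser rather than merely a local one.
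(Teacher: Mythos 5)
Your reduction via Lemma 1.1 and your evaluation of the Jaco orientation $\alpha_J$ are sound: an orientation containing a directed cycle can never be cleaned, so the minimisation effectively runs over acyclic orientations, and for the orientation prescribed by Definitions 2.1 and 2.2 your term-by-term computation (untruncated out-degrees with $d^+(v_j)\geq d^-(v_j)$ for $j\leq i$, and out-degree $n-j$ inside the complete Hope subgraph for $j>i$) does produce exactly the right-hand side. This establishes the inequality $b_r(J_n(1))\leq \sum_{j=1}^{i}(d^+(v_j)-d^-(v_j))+\sum_{j=i+1}^{n}\max\{0,(n-j)-d^-(v_j)\}$, modulo structural facts about Jaco graphs (from [2]) that it is reasonable to import. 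Note, for the record, that the paper itself gives no proof of this statement at all --- it is quoted as a theorem proven in reference [4] --- so there is no in-paper argument to measure your proposal against; it must stand on its own.

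On its own, however, it does not yet stand, because the second half (optimality of $\alpha_J$) is a plan rather than a proof, and you have located but not closed the essential difficulty. Concretely: your fact (a) --- that every acyclic orientation of the Hope subgraph is a transitive tournament with a fixed internal contribution --- does not let you split the objective, because the imbalance $|d^+(v)-d^-(v)|$ of a Hope vertex mixes its arcs inside $\Bbb H_n(1)$ with its arcs to lower-indexed vertices; the quantity $\tfrac12\sum_v|d^+(v)-d^-(v)|$ is not a sum of an ``internal Hope'' term and a ``rest'' term, so ``cannot be improved'' is not meaningful as stated. Your fact (b), a ``greedy-optimal linear extension'' on the block $v_1,\dots,v_i$, is not a defined notion, let alone a proved one. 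And the adjacent-transposition/exchange argument that is supposed to show the identity order is a global (not merely local) minimiser is exactly the crux you flag and then leave unexecuted; local moves alone cannot certify global optimality without an additional convexity- or exchange-type lemma, which is missing. As submitted, the proposal proves one inequality and gestures at the other, so the theorem is not established.
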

\noindent In general we have that if $\beta_{G'}(v)$ at a particular cleaning step has $\beta_{G'}(v) > d_{G'}(v)$, exactly $\beta_{G'}(v) - d_{G'}(v)$ brushes are left redundant and can clean along new edges linked to vertex $v$ if such are added through transformation of the graph $G$. The latter observation allows for an adaption of Theorem 2.1 to obtain the \emph{brush number} of $\mu(J_n(1)) n \geq 3$. Note that $\mu(J_1(1)) \simeq K_1 \cup P_2,$ hence a disconnected graph. Easy to see that $\mu(J_2(1)) \simeq C_5$ hence $b_r(\mu(J_2(1)) = 2.$
\begin{theorem}
For the Jaco graph $J_n(1), n \in \Bbb N, n \geq 2$ the brush number of the Mycielski Jaco graph is given by:\\ \\
$b_r(\mu(J_n(1)) = 2\sum\limits_{i=1}^{n}d^+_{J_n(1)}(v_i).$
\end{theorem}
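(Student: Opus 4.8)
The plan is to use Lemma 1.1 to turn the statement into a minimisation over orientations, to obtain the value $2\sum_{i=1}^{n}d^+_{J_n(1)}(v_i)$ by exhibiting one explicit orientation (upper bound), and then to show that this orientation is optimal (lower bound), the latter being the difficult half. Two facts are used throughout. By Lemma 1.1, $b_r(\mu(J_n(1)))=\min_{\alpha_i\in\Bbb A}\sum_{v}\max\{0,d^+(v)-d^-(v)\}$, and the orientations with finite $b_r^{\alpha_i}$ are exactly the acyclic ones, since a directed cycle produces a circular prerequisite that no cleaning sequence can resolve; each acyclic orientation is read off a linear order $\sigma$ of $V(\mu(J_n(1)))$ by directing every edge towards its $\sigma$-later endpoint. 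Moreover $\sum_{i=1}^{n}d^+_{J_n(1)}(v_i)=\epsilon(J_n(1))$, so the target quantity is $2\epsilon(J_n(1))$.

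For the upper bound I would take the orientation $\alpha^\ast$ that keeps the defining Jaco orientation $v_i\to v_j$ $(i<j)$ on the original edges, directs every cross edge from the original vertex to the shadow, $v_i\to x_j$, and directs every spoke $x_i\to w$. This is realised by the order $v_1\prec\cdots\prec v_n\prec x_1\prec\cdots\prec x_n\prec w$, so $\alpha^\ast$ is acyclic and cleanable. Using $d_{\mu(J_n(1))}(v_i)=2d_{J_n(1)}(v_i)$ together with $d_{J_n(1)}(v_i)=d^+_{J_n(1)}(v_i)+d^-_{J_n(1)}(v_i)$, a short computation gives $d^+(v_i)-d^-(v_i)=2d^+_{J_n(1)}(v_i)\ge 0$ at every original vertex, $d^+(x_i)-d^-(x_i)=1-d_{J_n(1)}(v_i)\le 0$ at every shadow, and $d^+(w)-d^-(w)=-n\le 0$ at $w$. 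Summing the positive parts yields $\sum_v\max\{0,d^+(v)-d^-(v)\}=2\sum_{i=1}^{n}d^+_{J_n(1)}(v_i)$, so $b_r(\mu(J_n(1)))\le 2\sum_{i=1}^{n}d^+_{J_n(1)}(v_i)$; here each shadow $x_i$ retains $d_{J_n(1)}(v_i)-1$ redundant brushes, exactly the surplus described in the remark preceding the theorem, which certifies that the cleaning completes.

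For the lower bound I would pass to the frontier profile of a layout. Fix a linear order $\sigma=(u^{(1)},\dots,u^{(N)})$, $N=2n+1$, and let $c_k$ be the number of edges with exactly one endpoint among $u^{(1)},\dots,u^{(k)}$, so $c_0=c_N=0$. Inserting $u^{(k)}$ into the prefix turns its forward edges into cut edges and removes its backward edges, whence $d^+_\sigma(u^{(k)})-d^-_\sigma(u^{(k)})=c_k-c_{k-1}$ and therefore $\sum_v\max\{0,d^+_\sigma(v)-d^-_\sigma(v)\}=\sum_{k}\max\{0,c_k-c_{k-1}\}$, the total of the up-steps of $(c_k)$, which is at least $\max_k c_k$. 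Minimising over $\sigma$ gives $b_r(\mu(J_n(1)))\ge \min_\sigma\max_k c_k$, the cutwidth of $\mu(J_n(1))$. Since the layout $\alpha^\ast$ above has unimodal profile with peak exactly $2\epsilon(J_n(1))$, it suffices to prove that every layout has a prefix cut of at least $2\epsilon(J_n(1))$ edges.

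Establishing this cut bound is the main obstacle. I would track the cross edges across the frontier: each original edge $v_iv_j$ carries the two cross edges $v_ix_j,v_jx_i$, and I would show that at the step where the frontier first separates the densely joined part of $J_n(1)$, anchored on the prime Jaconian vertex and its Hope subgraph clique $\Bbb H_n(1)$, the original, cross and spoke edges crossing the frontier already total at least $2\epsilon(J_n(1))$. The delicate point is to rule out layouts that interleave the shadows $x_i$ and the vertex $w$ among the original vertices in order to flatten the profile below this peak; one must show that no such interleaving helps, while keeping the count consistent with the degree identities and with acyclicity. Because this frontier count is organised around the same clique structure that underlies Theorem 2.1, I expect a counting argument parallel to that theorem to close the gap.
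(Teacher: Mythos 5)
Your upper-bound half is correct and rigorous: the orientation $\alpha^\ast$ (Jaco arcs on the original edges, cross edges $v_i\to x_j$, spokes $x_i\to w$) is acyclic, and your degree computation gives $\sum_v\max\{0,d^+(v)-d^-(v)\}=2\sum_{i=1}^n d^+_{J_n(1)}(v_i)$, hence $b_r(\mu(J_n(1)))\le 2\sum_{i=1}^n d^+_{J_n(1)}(v_i)$. This is in fact tighter reasoning than the paper's own proof, which is an induction that silently assumes the cleaning sequence of $\mu(J_n(1))$ must follow the Jaco order $v_1,v_2,\dots,v_n$ with the shadow vertices cleaned only passively.

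The gap is your lower bound, and it is not merely unproven but unprovable: the "delicate point" you flagged (ruling out layouts that interleave the shadows among the original vertices) cannot be ruled out, and in fact such interleaving strictly lowers the brush number, so the stated theorem is false for $n=3$. Take $J_3(1)=P_3$ with arcs $(v_1,v_2),(v_2,v_3)$, so the claimed value is $2\sum_i d^+_{J_3(1)}(v_i)=4$. In $\mu(J_3(1))$ order the vertices $v_1\prec x_2\prec v_3\prec v_2\prec x_1\prec x_3\prec w$ and direct every edge toward its later endpoint. Then $d^+-d^-$ equals $+2$ at $v_1$, $+1$ at $x_2$ (out-neighbours $v_3,w$, in-neighbour $v_1$), and is $\le 0$ at every other vertex, so by Lemma 1.1 we get $b_r(\mu(J_3(1)))\le 3$. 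Concretely: place two brushes on $v_1$ and one on $x_2$ and clean in the order $v_1,x_2,v_3,v_2,x_1,x_3,w$; the brush sent from $x_2$ to $v_3$ returns into the original path \emph{against} the Jaco orientation and cleans $v_3v_2$, and all nine edges get cleaned with three brushes. The same layout has cut profile $2,3,3,3,3,3,0$, so the cutwidth of $\mu(J_3(1))$ is at most $3<4=2\epsilon(J_3(1))$; thus your proposed cut bound (every layout has a prefix cut of size at least $2\epsilon(J_n(1))$) fails as well --- cutwidth is generally far smaller than the number of edges, so no counting argument "parallel to Theorem 2.1" can close this gap. The honest conclusion of your framework is the inequality $b_r(\mu(J_n(1)))\le 2\sum_{i=1}^n d^+_{J_n(1)}(v_i)$, with equality failing already at $n=3$; the paper's Theorem 2.2, whose proof assumes away exactly the interleaved cleaning sequences you were worried about, is incorrect as stated.
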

\begin{proof}
Consider the Jaco graph, $J_3(1)$. From [4] it follows that $b_r(J_3(1)) =1$ with brush allocations $\beta_{J_3(1)}(v_1) =1, \beta_{J_3(1)}(v_2) = 0, \beta_{J_3(1)} = 0.$ In the graph $\mu(J_3(1))$ we add the set of vertices $\{x_1, x_2, x_3\} \cup \{w\}$ and we add the edges $\{v_1x_2, v_2x_1, v_2x_3, v_3x_2\} \cup \{wx_1, wx_2, wx_3\}.$\\ \\
Clearly $v_1$ has degree, $d_{\mu(J_3(1))}(v_1) =2.$ So besides the normal cleaning sequence within $J_3(1)$, vertex $v_1$ must either dispatch a second brush along edge $v_1x_2$ or await a brush dispatched from vertex $x_2$. In the latter case a brush will be left redundant at vertex $v_1$. So without loss of generality and to ensure optimality, allocate a second brush to $v_1$. At this stage we have the \emph{first brush number term} of $\mu(J_3(1)),$ $2d^+_{J_3(1)}(v_1) = 2.$\\ \\
On initiating the cleaning process one brush will be dispatched to $v_2$ along the arc $(v_1, v_2)$ (edge $v_1v_2$). In the next step of cleaning the brush from vertex $v_1$ now at $v_2$ can be dispatched to $v_3$ provided that two more brushes are initially allocated to $v_2$ to begin with. But in $\mu(J_3(1))$ we have the additional edges $v_2x_1, v_2x_3,$ so two additional brushes must be allocated to vertex $v_2$. So we have the \emph{second brush number term} of $\mu(J_3(1))$, $2d^+_{J_3(1)}(v_2) = 2.$\\ \\
In the second step of cleaning the brush, initially dispatched from $v_1$ to $v_2$ can be dispatched to vertex $v_3$. The third step of cleaning may proceed with no further allocation to $v_3$. Hence $v_3$ requires \emph{zero} additional brushes. Now we have the \emph{third and final brush number term} of $\mu(J_3(1))$, $2d^+_{J_3(1)}(v_3) = 0.$ Hence we have the result:\\ \\
$b_r(\mu(J_3(1))) = 2\sum\limits_{i=2}^{3}d^+_{J_3(1)}(v_i).$\\ \\
We settled the result through induction. Assume the the result holds for $J_k(1)$, with prime Joconian vertex $v_j$. Thus we assume that:\\ \\
$b_r(\mu(J_k(1))) = 2\sum\limits_{i=2}^{k}d^+_{J_k(1)}(v_i),$ holds.\\ \\
Now consider the Jaco graph $J_{k+1}(1)$ to begin with. This extension adds the vertex $v_{k+1}$ and the set of arcs, $\{(v_{i+1},v_{k+1}), (v_{i+2},v_{k+1}), (v_{i+3},v_{k+1}), ..., (v_n,v_{k+1})\}$ to $J_k(1)$ to obtain $J_{k+1}(1).$\\ \\
Each vertex $v_j, i+1 \leq j \leq k$ receives two additional arcs namely $(v_j,v_{k+1})$ and $(v_j,x_{k+1})$ in the transformed Mycielski Jaco graph. The minimum additional brushes per such vertex is thus two. We have that the respective brush number terms of $\mu(J_{k+1}(1))$ are $(d^+_{J_k}(v_j) + 2) = 2d^+_{J_{k+1}(1)}(v_j).$ This implies we have the partial brush number:\\ \\
$2\sum\limits_{i=1}^{k} d^+_{J_{k+1}(1)}(v_i).$\\ \\
With regards to the vertex $v_{k+1}$ we have, $d_{J_{k+1}(1)}(v_{k+1}) = d^-(v_{k+1})$ stemming from the $k-j$ vertices, $v_{j+1}, v_{j+2}, v_{j+3}, ..., v_k.$ Hence, $k-i$ brushes will be allocated to vertex $v_{k+1}$ through the iterative cleaning process. In the Mycielski Jaco graph the additional edges $v_{k+1}x_k, v_{k+1}x_{k-1}, v_{k+1}x_{k-2}, ..., v_{k+1}x_{j+1}$, (\emph{exactly $k-j$ edges}) have been added to vertex $v_{k+1}$ so no additional brushes are needed. It means that the \emph{final brush number term} is, $2d^+_{J_{k+1}(1)}(v_{k+1}) = 2.0 = 0.$ Since all the brush number terms of $\mu(J_{k+1}(1))$ have now been determined at the absolute minimum we have the result, $b_r(\mu(J_{k+1}(1)) = 2\sum\limits_{i=1}^{k+1}d^+(v_i).$\\ \\
Through induction we settle the result that:\\ \\
$b_r(\mu(J_n(1)) = 2\sum\limits_{i=1}^{n}d^+_{J_n(1)}(v_i), \forall J_n(1), n \in \Bbb N, n \geq 3.$
\end{proof}
\section{Brush Centre of a Graph}
\noindent From Theorem 2.1 and Lemma 1.1 the brush allocations can easily be determined for Jaco graphs. See [4]. For example, $J_9(1)$ requires the minimum brush allocations, $\beta_{J_9(1)}(v_1) = 1, \beta_{J_9(1)}(v_2) = 0, \beta_{J_9(1)}(v_3) = 1, \beta_{J_9(1)}(v_4) = 2, \beta_{J_9(1)}(v_5) = 1, \beta_{J_9(1)}(v_6) = 1, \beta_{J_9(1)}(v_7) = 0, \beta_{J_9(1)}(v_8) = 0, \beta_{J_9(1)}(v_9) = 0.$ We note that the allocations of $\beta(v_i) > 0$ are located at vertices $v_1, v_3, v_4, v_5, v_6$. The \emph{end} allocation itself is a minimum allocation associated with an optimal orientation.\\ \\
So far cleaning was restricted to a brush transversing a dirty edge only once. If the latter restriction is relaxed to, after the first complete cleaning sequence a brush may transverse an edge for a second time for another complete reversed cleaning sequence, the initial allocation of brushes or a deviation thereof can be obtained.This observation leads to the concept of a \emph{brush centre}. The question is, what is the minimium set of vertices, $\Bbb B_r(G) \subseteq V(G)$ (\emph{primary condition}) to allocate the $b_r(G)$ brushes to, to ensure cleaning of graph $G$ and on return \emph{(second cleaning)} the brushes are clustered as centrally as possible for \emph{maintenance} (\emph{secondary condition is the min(max(distance between vertices of the brush centre))}).  Finding a \emph{brush centre} of a graph will allow for well located maintenance centres of the brushes prior to a next cycle of cleaning. Because brushes themselves may be technology of kind, the technology in real worl application will normally be the subject of maitenance or calibration or virus vetting or alike.\\ \\
It is easy to see that for the path $P_n$ the allocation of one brush to either $\{v_1\}$ or $\{v_n\}$ is a minimum set and clustered absolutely centrally so both represent a \emph{brush centre}. So $P_n$ has two possible \emph{brush centres}. Similarly, the allocation of two brushes to any set $\Bbb B_r(C_n) = \{v_i\}, v_i \in V(C_n)$ of the cycle cycle $C_n$ represents a \emph{brush centre}. So $C_n$ has $n$ possible \emph{brush centres}. Hence the \emph{brush centre} of a graph $G$ is not necessary unique. However, for the star $K_{1,n}$ the \emph{brush centre} is indeed unique, namely, $\Bbb B_r(K_{1,n}) = \{v_1\}_{\emph{($v_1$ central)}}$, with $\beta_{K_{1,n}}(v_1)_{\emph{($v_1$ central)}} = n.$
\subsection{Brush centre of the Mycielski Jaco Graph, $\mu(J_n(1)), n \in \Bbb N$}
Let us immediately jump paths and consider $J_5(1)$. In the defined Jaco graph $J_5(1)$ the brush number is $b_r(J_5(1)) = 2$ [4], with the brush allocation $\beta_{J_5(1)}(v_1) =1, \beta_{J_5(1)}(v_2) = 0, \beta_{J_5(1)}(v_3) =1, \beta_{J_5(1)}(v_4) = 0, \beta_{J_5(1)}(v_5) = 0.$ We note that after the first cleaning sequence both brushes are allocated to the vertex $v_5$. The latter allocation of brushes with an appropriate re-orientation of $J_5(1)$ also clean the Jaco graph. On a second cleaning sequence the brushes can \emph{park} at $ v_5$ for \emph{maintenance}. Clearly the set $\{v_5\}$ with $\beta_{J_5(1)}(v_5) =2$ is a (\emph{the}) brush centre.
\begin{theorem}
Consider the initial minimal brush allocation of $b_r(J_n(1))$ brushes to the finite Jaco graph, $J_n(1), n \in \Bbb N$, [4]. The location of the brushes at the end of the cleaning sequence represents a brush centre of $J_n(1), n \in \Bbb N.$ 
\end{theorem}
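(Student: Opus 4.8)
The plan is to pin down the resting positions of the brushes by a flow-conservation argument, to recognise that they land on a clique inside the Hope subgraph, and then to invoke the time-reversibility of brush cleaning to certify that this end configuration is itself a minimal cleaning allocation. First I would recall from Theorem 2.1 and [4] that the minimal allocation in the defined orientation of $J_n(1)$ is $\beta(v_j)=\max\{0,\,d^+(v_j)-d^-(v_j)\}$, in agreement with Lemma 1.1, and that cleaning proceeds in the topological order $v_1,v_2,\dots,v_n$. Since every arc runs from a lower to a higher index, each $v_j$ fires only after all of its in-neighbours, so at the moment it fires it has accumulated exactly $d^-(v_j)$ incoming brushes and then dispatches exactly $d^+(v_j)$ of them. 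Hence the number of brushes left standing at $v_j$ is
\[
\beta_{\mathrm{end}}(v_j)=\beta(v_j)+d^-(v_j)-d^+(v_j)=\max\{0,\,d^-(v_j)-d^+(v_j)\}.
\]
As $\sum_{v}d^+(v)=\sum_{v}d^-(v)$, exactly $b_r(J_n(1))$ brushes survive, now supported on the set $S=\{v_j:\ d^-(v_j)>d^+(v_j)\}$.

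Next I would establish centrality. The summand $\sum_{j=1}^{i}(d^+(v_j)-d^-(v_j))$ in Theorem 2.1 carries no truncation, so $d^+(v_j)\ge d^-(v_j)$ for every index up to the prime Jaconian vertex $v_i$; consequently no such vertex lies in $S$, and $S$ is confined to the Hope subgraph $\Bbb{H}_n(1)$, where $d^+(v_j)=n-j$. Because $\Bbb{H}_n(1)$ is complete (Definition 2.5), any two vertices of $S$ are adjacent, so $\max\{d(u,v):u,v\in S\}=1$ when $|S|\ge 2$ and equals $0$ when $|S|=1$. This is the least value the secondary quantity $\min(\max(\text{distance}))$ can take for a set of that size, so the centrality requirement is met automatically once $S$ is shown to be of minimum cardinality.

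To secure the primary feasibility I would use \emph{reversibility}. Reversing every arc produces an orientation $\alpha^{R}$ whose forced minimal allocation, by Lemma 1.1, is $\max\{0,\,d^{+}_{\alpha^{R}}(v)-d^{-}_{\alpha^{R}}(v)\}=\max\{0,\,d^-(v)-d^+(v)\}=\beta_{\mathrm{end}}(v)$. The identity $\sum_{v}(d^+(v)-d^-(v))=0$ gives $\sum_{v}\max\{0,\,d^--d^+\}=\sum_{v}\max\{0,\,d^+-d^-\}$, hence $b_r^{\alpha^{R}}=b_r(J_n(1))$. Therefore placing the $b_r(J_n(1))$ brushes on $S$ and running the reversed sequence cleans $J_n(1)$ --- indeed it is exactly the time-reversal of the original run --- which is the precise sense in which the end positions form a maintenance point from which a fresh cleaning can be launched.

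Finally I would argue that $S$ has minimum cardinality among all vertex sets that can carry a valid $b_r(J_n(1))$-brush allocation, which I expect to be the main obstacle. For the reversed orientation the support $S$ is forced entry-by-entry by Lemma 1.1, so it cannot be thinned without either dropping below $b_r(J_n(1))$ brushes or stranding a dirty arc; the delicate part is to exclude some unrelated orientation that cleans with the same number of brushes concentrated on fewer vertices. I would attack this by showing that in any optimal orientation the vertices compelled to hold surplus brushes form a set no smaller than $S$, exploiting the nested, consecutive in-neighbourhood structure that Definitions 2.1--2.2 impose on $J_n(1)$. With minimality and centrality both in place, $S$ meets the primary and secondary conditions and is thus a \emph{brush centre}; the argument for general $n$ is the same accounting already exhibited for $J_5(1)$ and $J_9(1)$.
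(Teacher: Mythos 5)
Your strategy is genuinely different from the paper's and, in its completed parts, sharper. The paper proceeds by induction on $n$: assuming the end positions for $J_k(1)$ form a brush centre lying inside the Hope subgraph $\Bbb H(J_k(1))$, it tracks how the extension to $J_{k+1}(1)$ adds one edge at each vertex beyond the prime Jaconian vertex and concludes that the new end positions again lie in $\Bbb H(J_{k+1}(1))$, so the max-distance is again $1$. Your argument is direct and avoids induction altogether: the flow-conservation computation $\beta_{\mathrm{end}}(v_j)=\max\{0,\,d^-(v_j)-d^+(v_j)\}$, the observation that the support $S$ sits inside the complete Hope subgraph (so pairwise distance at most $1$), and the arc-reversal argument showing that $S$ carries a valid $b_r(J_n(1))$-allocation under the reversed orientation. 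The reversal step in particular makes precise what the paper only gestures at (its $J_5(1)$ discussion says "an appropriate re-orientation \dots also clean the Jaco graph" without exhibiting one), and your identification of the end positions is more explicit than anything in the paper's proof.

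There is, however, a genuine gap, and you flagged it yourself: the \emph{primary condition} in the paper's definition demands that $\Bbb B_r(G)$ be a \emph{minimum} set of vertices to which the $b_r(G)$ brushes can be allocated so as to clean $G$, and you never prove that no set of fewer than $|S|$ vertices can do this. Your final paragraph is a plan ("I would attack this by showing \dots") rather than an argument; ruling out some unrelated optimal orientation whose forced surplus sits on fewer vertices is precisely the hard part, and nothing in the proposal accomplishes it. To be fair, the paper's own induction is no more rigorous on this point --- it simply asserts "the minimum additional brushes to be allocated is always possible (primary condition)" without ever addressing cardinality-minimality of the vertex set --- but judged as a self-contained proof, your proposal establishes feasibility and centrality of $S$ while leaving the minimality half of the primary condition unproven, so the theorem as stated is not yet fully settled by it.
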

\begin{proof}
For the paths $J_1(1), J_2(1), J_3(1), J_4(1)$ the result is obvious. As observed in the introduction the set $\Bbb B_r(J_5(1)) =\{v_5\}$ with $\beta_{J_5(1)}(v_5) =2$ is a (\emph{the}) brush centre of $J_5(1)$.\\ \\
We prove the result through induction. Assume the result holds for $J_k(1)$ which has the prime Jaconian vertex $v_i$. So our assumption implies that after the first cleaning sequence the brushes are clustered amongst vertices in the vertex set $\Bbb B_r \subseteq \{v_{i+1},, v_{i+2}, v_{i+3}, .., v_k\}$ in compliance with both the \emph{primary condition} and the \emph{secondary condition} namely, the min(max(distance between vertices of the brush centre)). Note that $min(max(d_{v,u \in \Bbb B_r}(v,u))) = 1$ because $\Bbb B_r(J_k(1)) \subseteq \Bbb H(J_k(1)).$\\ \\
Now consider the Jaco graph $J_{k+1}(1)$ to begin with. This extension adds the vertex $v_{k+1}$ and the set of edges, $\{v_{j+1}v_{k+1}, v_{j+2}v_{k+1}, v_{j+3}v_{k+1}, ..., v_nv_{k+1}\}$ to $J_k(1)$ to obtain $J_{k+1}(1).$\\ \\
Each vertex $v_j, i+1 \leq j \leq k$ receives one additional edges namely $v_jv_{k+1}.$ In addition the prime Jaconian vertex may, or may not change to $v_{i+1}$. Exactly $k-i$ new edges were added in the extension from $J_k(1))$ to $J_{k+1}(1)$. Since $b_r(J_{k+1}(1)) \geq b_r(J_k(1))$ additional brushes might be needed at some of the vertices $v_{i+1}, v_{i+2}, ..., v_{k_1}$ if $v_i$ remains the prime Jaconian vertex of $J_{k+1}(1).$ Else, additional brushes might be needed at some of the vertices $v_{i+2}, ..., v_{k_1}$. Note that the vertex $v_k$ will not require additional brushes. Since the minimum additional brushes to be allocated is always possible (\emph{primary condition}) and $\Bbb B_r(J_{k+1}(1)) \subseteq \Bbb H(J_{k+1})$ the $min(max(d_{v,u \in \Bbb B_r(J_{k+1}(1))}(v,u))) = 1$ (\emph{secondary condition}), the result follows for $J_{k+1}(1).$\\ \\
Hence the result is settled for all Jaco graphs, $J_n(1), n \in \Bbb N.$
\end{proof}
\textbf{\emph{Open access:}} This paper is distributed under the terms of the Creative Commons Attribution License which permits any use, distribution and reproduction in any medium, provided the original author(s) and the source are credited. \\ \\
References (Limited) \\ \\
$[1]$  Bondy, J.A., Murty, U.S.R., \emph {Graph Theory with Applications,} Macmillan Press, London, (1976). \\
$[2]$ Kok, J., Fisher, P., Wilkens, B., Mabula, M., Mukungunugwa, V., \emph{Characteristics of Finite Jaco Graphs, $J_n(1), n \in \Bbb N$}, arXiv: 1404.0484v1 [math.CO], 2 April 2014. \\
$[3]$  Kok, J., Fisher, P., Wilkens, B., Mabula, M., Mukungunugwa, V., \emph{Characteristics of Jaco Graphs, $J_\infty(a), a \in \Bbb N$}, arXiv: 1404.1714v1 [math.CO], 7 April 2014. \\ 
$[4]$ Kok, J., \emph{A note on the Brush Number of Jaco Graphs, $J_n(1), n \in \Bbb N$}, arXiv: 1412.5733v1 [math.CO], 18 December 2014. \\
$[5]$ McKeil, S., \emph{Chip firing cleaning process}, M.Sc. Thesis, Dalhousie University, (2007).\\ 
$[6]$ Messinger, M. E., \emph{Methods of decontaminating a network}, Ph.D. Thesis, Dalhousie University, (2008).\\
$[7]$ Messinger, M.E., Nowakowski, R.J., Pralat, P., \emph{Cleaning a network with brushes}. Theoretical Computer Science, Vol 399, (2008), 191-205.\\
$[8]$ Mycielski, J., \emph{Sur le coloriages des graphes}, Colloq. Maths. Vol 3, (1955), 161-162.\\
$[9]$ Ta Sheng Tan,\emph{The Brush Number of the Two-Dimensional Torus,} arXiv: 1012.4634v1 [math.CO], 21 December 2010.
\end{document}